\theoremstyle{plain}
\newtheorem{theorem}{Theorem}
\newtheorem{proposition}{Proposition}
\newtheorem{lemma}{Lemma}
\newtheorem{cor}{Corollary}
\newtheorem{remark}{Remark}
\theoremstyle{remark}
\renewcommand{\epsilon}{\varepsilon}
\renewcommand{\phi}{\varphi}
\def\N{\mathbb{N}}
\def\cA{\EuScript{A}}
\def\R{\mathbb{R}}
\def\Id{\text{\rm Id}}
\begin{document}
\title{Datko-Pazy conditions  for nonuniform exponential stability}

\begin{abstract}
For linear cocycles over both maps and flows, we obtain Datko-Pazy type of conditions under which all Lyapunov exponents of a given cocycle are negative. Furthermore, by
combining our results with the results on subadditive ergodic optimisation, we also present new criteria for uniform exponential stability of linear cocycles. 
\end{abstract}

\begin{thanks}
{The author was supported by an Australian Research Council Discovery Project DP150100017 and in part by the Croatian Science Foundation under the project IP-2014-09-2285. }
\end{thanks}

\author{Davor Dragi\v cevi\'c}
\address{School of Mathematics and Statistics, University of New
South Wales, Sydney NSW 2052, Australia}
\email{d.dragicevic@unsw.edu.au}
\address{Department of Mathematics, University of Rijeka, Croatia}
\email{ddragicevic@math.uniri.hr}

\keywords{Linear cocycle, Lyapunov exponents, Datko-Pazy theorem, Stability}
\subjclass[2010]{Primary: 37D25.}
\maketitle

\section{Introduction}
In the process of extending the Lyapunov operator equation to the case of autonomous systems of the form $x'=Ax$,  where the operator $A$ is unbounded, Datko~\cite{D1}
established his famous theorem which asserts that the trajectories of a $C_0$-semigroup $\{T(t)\}_{t\ge 0}$ on a Hilbert space $X$ exhibit exponential decay if and only if they belong to
$L^2(\R_+, X)$. Since then this result has been recognized as one of the major accomplishments of the modern control theory and has inspired numerous generalizations and extensions.
In particular, Pazy~\cite{P} proved that the conclusion of Datko's theorem is valid even if we replace $L^2(\R_+, X)$ with any $L^p(\R_+, X)$, $p\in [1, \infty)$. Furthermore, Datko~\cite{D2}
obtained related results which deal with the  exponential stability of continuous-time evolution families (see~\cite{Z} for related results in the case of discrete time).
The far-reaching generalization of Datko's theorem for evolution families was obtained by Rolewicz~\cite{R} (see also~\cite{BD, S}). More precisely, Rolewicz 
proved that for an evolution family $T(t, s)_{t\ge s\ge 0}$  of bounded operators on an arbitrary Banach space $X$, the following 
statements are equivalent:
\begin{itemize}
\item $T(t, s)_{t\ge s\ge 0}$ is exponentially stable, i.e. there exist $D, \lambda >0$ such that
\[
\lVert T(t, s)\rVert \le De^{-\lambda (t-s)} \quad \text{for $t\ge s\ge 0$;}
\]
\item  for each $x\in X$, there exists $\alpha (x)>0$ such that
\[
\sup_{s\ge 0} \int_s^\infty N(\alpha (x), \lVert T(t, s)x\rVert )\, dt <\infty.
\]
\end{itemize}
Here, $N \colon (0, \infty) \times [0, \infty) \to [0, \infty)$ is any map  satisfying the following conditions:
\begin{itemize}
\item $N(\cdot, u)$ is non-decreasing for each $u\ge 0$;
\item $N(\alpha, \cdot)$ is continuous and non-decreasing for each $\alpha >0$;
\item $N(\alpha, 0)=0$ and $N(\alpha, u)>0$ for every  $\alpha >0$ and $u\ge 0$. 
\end{itemize}
The above  result has generalized and unified many of the previously known results. 
The more recent contributions  in this line of research deal with certain stochastic extensions (see~\cite{HVV}), weaker forms of (exponential stability) (see~\cite{BLMS, DD, DD1, LP, PPC}) and with 
various relaxations of Datko's condition
that still imply the existence of  exponential stability (see~\cite{PP, RPP}).

In addition, in~\cite{PPP} and~\cite{PPB} the authors have obtained Datko-Pazy type of theorems for linear cocycles. The importance of those results stems
from the fact that the notion of a linear cocycle arises naturally in the study of the nonautonomous dynamical systems. Indeed, the  smooth ergodic theory builds around the study
of the derivative cocycle associated either to a map or a flow (see Chapters 5 and 6 in~\cite{BP}). Moreover, cocycles describe solutions
of variational equations and Cauchy problems with unbounded
coefficients (see Chapter 6 in~\cite{CL}). Finally, we note that cocycles describe solutions of stochastic differential equations (see~\cite{A} for details). Hence, it is of considerable interest to obtain sufficient condition which imply that the cocycle exhibits stable behaviour. 
We refer to~\cite{MSS, Sasu1, Sasu2} for other important works devoted to various characterizations of uniform exponential stability for linear cocycles. 

In the present paper, we formulate Datko-Pazy type of conditions which imply that all Lyapunov exponents of a given linear cocycle are negative. This in turn provides sufficient conditions 
for the existence of a nonuniform exponential stability which includes the classical concept of a (uniform) exponential stability as a very particular case. We emphasize that
nonuniform exponential stability (which corresponds to negativity of all Lyapunov exponents) is a special case of the nonuniform hyperbolicity (which corresponds to the nonvanishing of Lyapunov exponents).
The latter represents an important branch of the modern theory of dynamical systems that originated in the landmark work of Pesin~\cite{P2, P3} (see also~\cite{BP} for detailed exposition).
Our results in this direction are closely related to those in~\cite{PPB} with an important difference that in~\cite{PPB},  sufficient conditions for the existence of a nonuniform exponential stability
are expressed in terms of the so-called Lyapunov norms, while our conditions are formulated in terms of original norm. Consequently, our arguments (unlike those in~\cite{PPB}) rely heavily on ergodic theory.

As a by-product of our results described in previous paragraph, we also extend (for a certain class of cocycles) results in~\cite{PPP} that deal with the  uniform exponential stability of cocycles. More precisely, 
 the assumptions  formulated in~\cite{PPP}
that imply that the cocycle exhibits uniform exponential stability require that certain conditions hold for every point that belongs to the base space $M$ of our dynamics. Here we show that
 assumptions in~\cite{PPP} can be considerably relaxed by assuming that those conditions hold for every point that belongs to the subset $E$ of $M$, which is as large as $M$ from the measure-theoretic
point of view but can be considerably smaller  from let's say dimension theory point of view (see the discussion after Theorem~\ref{de2} for more details). This is achieved by using
useful  results of Morris~\cite{M} on the  so-called subadditive ergodic optimisation. 

The paper is organized as follows. In Section~\ref{COM}, we first give Datko-Pazy type of characterization of cocycles over maps with negative Lyapunov exponents (see Theorem~\ref{de}). 
Afterwards, we show that
negativity of Lyapunov exponents corresponds to the  nonuniform exponential stability even in the Banach-space setting (see Theorem~\ref{de3}). We conclude by providing conditions for the
uniform exponential stability of cocycles that extend those in~\cite{PPP} (see Theorem~\ref{de2}). In Section~\ref{COF}, we obtain the corresponding results for cocycles over semi-flows.

\section{Cocycles over maps}\label{COM}
Let $M$ be an arbitrary compact  metric  space and assume that $f\colon M \to M$ is a continuous map. Furthermore, let $X=(X, \lVert \cdot \rVert)$ be an arbitrary 
separable Banach space and let $B(X)$ denote the space of
all bounded linear operators on $X$. Finally, set 
$\N_0=\{0, 1, 2, \ldots \}$. A map $\cA \colon M \times \N_0 \to B(X)$ is said to be a \emph{cocycle} over $f$ if:
\begin{enumerate}
 \item $\cA(q, 0)=\Id$ for each $q\in M$;
 \item $\cA(q, n+m)=\cA(f^m(q), n)\cA(q, m)$ for each $q\in M$ and $n, m\in \N_0$;
 \item the map $A\colon M \to B(X)$ given by \begin{equation}\label{gen} A(q)=\cA(q, 1), \quad  q\in M \end{equation}is strongly continuous, i.e. the map $q\mapsto A(q)x$ is continuous
 for each $x\in X$. 
\end{enumerate}
We recall that the map $A$ given by~\eqref{gen} is called the \emph{generator} of a cocycle $\cA$. 
Let $\mathcal E(f)$ denote the set of all ergodic, $f$-invariant Borel probability measures on $M$. Since $M$ is compact and $f$ continuous, we have that $\mathcal E(f)\neq \emptyset$.
Observe  that:
\begin{itemize}
\item  the map $q\mapsto \lVert \cA(q, n)\rVert$ is Borel-measurable for each $n\in \mathbb N$ (see~\cite[Lemma 2.4.]{GTQ});
\item  it follows from the strong continuity of $A$, compactness of $M$ and the uniform boundness principle that 
\begin{equation}\label{ww}
\sup_{q\in M} \lVert A(q)\rVert <\infty. 
\end{equation}
\end{itemize}
Hence, the Kingman's subadditive ergodic theorem~\cite{King} implies  that  for each $\mu \in \mathcal E(f)$,
there exists $\lambda_\mu (\cA) \in [-\infty, \infty)$ such that
\begin{equation}\label{jkl}
 \lambda_\mu (\cA)=\lim_{n\to \infty} \frac 1 n \log \lVert \cA(q, n)\rVert, \quad \text{for $\mu$-a.e. $q\in M$.}
\end{equation}
The number $\lambda_\mu (\cA)$ is called the \emph{largest Lyapunov exponent}  of a cocycle $\cA$ with respect to  $\mu$. 
The following result gives a Datko--Pazy type of characterization of cocycles with the property  that $\lambda_\mu(\cA)$ is negative. 
\begin{theorem}\label{de}
For any $\mu \in \mathcal E(f)$, 
 the following properties are equivalent:
 \begin{enumerate}
  \item there exist a Borel-measurable function $C\colon M \to (0, \infty)$ and $p>0$ such that for  $\mu$-a.e. $q\in M$ and $x\in X$, 
  \begin{equation}\label{dpaz}
   \bigg{(} \sum_{n=0}^\infty \lVert \cA(q, n)x\rVert^p \bigg{)}^{1/p} \le C(q) \lVert x\rVert;
  \end{equation}
\item $\lambda_\mu (\cA)<0$.

 \end{enumerate}

\end{theorem}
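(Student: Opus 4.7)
The plan is to prove the two implications separately; $(2)\Rightarrow(1)$ is essentially bookkeeping, while $(1)\Rightarrow(2)$ is the crux.

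For $(2)\Rightarrow(1)$, I fix $\lambda\in(0,-\lambda_\mu(\cA))$ and use~\eqref{jkl}: for $\mu$-a.e.\ $q$ there is $n_0(q)\in\N_0$ with $\lVert\cA(q,n)\rVert\le e^{(\lambda_\mu+\lambda)n}$ whenever $n\ge n_0(q)$. Then $D(q):=\sup_{n\ge 0}\lVert\cA(q,n)\rVert e^{-(\lambda_\mu+\lambda)n}$ is Borel-measurable and finite $\mu$-a.e., and for any $p>0$ the choice $C(q):=D(q)(1-e^{p(\lambda_\mu+\lambda)})^{-1/p}$ satisfies~\eqref{dpaz} via the geometric estimate $\sum_n\lVert\cA(q,n)x\rVert^p\le D(q)^p\lVert x\rVert^p\sum_n e^{p(\lambda_\mu+\lambda)n}$.

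For $(1)\Rightarrow(2)$, I first note that~\eqref{dpaz} implies $\lVert\cA(q,n)\rVert\le C(q)$ for every $n\ge 0$ on a full-measure set, which by $f$-invariance of $\mu$ may be taken to be forward $f$-invariant. Applying~\eqref{dpaz} at $f^mq$ to the test vector $\cA(q,m)x$ and retaining only the $n=N-m$ term yields $\lVert\cA(q,N)x\rVert^p\le C(f^mq)^p\lVert\cA(q,m)x\rVert^p$ for $m\le N$. Summing the reciprocal form over $m=0,\ldots,N-1$ and comparing with~\eqref{dpaz} at $q$ produces the key pointwise bound
\[\lVert\cA(q,N)\rVert^p\le\frac{C(q)^p}{\sum_{m=0}^{N-1}C(f^mq)^{-p}}.\]
The $n=0$ term of the sum in~\eqref{dpaz} equals $\lVert x\rVert^p$, so $C\ge 1$ on the full-measure set and $C^{-p}\le 1$; Birkhoff's theorem together with ergodicity of $\mu$ then gives $\frac{1}{N}\sum_{m=0}^{N-1}C(f^mq)^{-p}\to\int_MC^{-p}\,d\mu=:\gamma\in(0,1]$, whence $\lVert\cA(q,N)\rVert\to 0$ as $N\to\infty$ for $\mu$-a.e.\ $q$ and consequently $\mu(B_N)\to 1$, where $B_N:=\{q:\lVert\cA(q,N)\rVert<1/2\}$.

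I conclude by combining the submultiplicative inequality $\log\lVert\cA(q,kN)\rVert\le\sum_{j=0}^{k-1}\log\lVert\cA(f^{jN}q,N)\rVert$ with the factorwise bounds $\log\lVert\cA(f^{jN}q,N)\rVert\le-\log 2$ on $B_N$ and $\log\lVert\cA(f^{jN}q,N)\rVert\le N\log K$ otherwise, where $K:=\sup_q\lVert A(q)\rVert<\infty$ by~\eqref{ww}. Dividing by $kN$ and sending $k\to\infty$, Kingman's theorem~\eqref{jkl} gives the left-hand side $\to\lambda_\mu(\cA)$, while Birkhoff's ergodic theorem applied to $f^N$ on the right gives, after integration against $\mu$,
\[\lambda_\mu(\cA)\le-\frac{\log 2}{N}(1-\mu(B_N^c))+(\log K)\,\mu(B_N^c),\]
and $\lambda_\mu(\cA)<0$ then follows provided $\mu(B_N^c)$ is sufficiently small. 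The main obstacle is precisely this quantitative balancing: the crude bound $N\log K$ on the ``bad'' factors demands $\mu(B_N^c)=o(1/N)$, which does not follow from $\mu(B_N^c)\to 0$ alone. I expect to overcome it by replacing $\lVert\cA(f^{jN}q,N)\rVert\le K^N$ with the sharper $\lVert\cA(f^{jN}q,N)\rVert\le C(f^{jN}q)$ valid on the forward-invariant full-measure set, reducing the inequality to $\lambda_\mu(\cA)\le-\frac{\log 2}{N}\mu(B_N)+\frac{1}{N}\int_{B_N^c}\log C\,d\mu$; the ``bad'' contribution then vanishes as $\mu(B_N^c)\to 0$ by absolute continuity of the integral, provided $\log^+C\in L^1(\mu)$, and securing this integrability via a truncation informed by the pointwise decay $\lVert\cA(q,N)\rVert^p\le 2C(q)^p/(\gamma N)$ will be the technical heart of the proof.
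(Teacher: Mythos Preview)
Your argument for $(2)\Rightarrow(1)$ is correct and matches the paper's. The pointwise inequality
\[
\lVert\cA(q,N)\rVert^{p}\le \frac{C(q)^{p}}{\sum_{m=0}^{N-1}C(f^{m}q)^{-p}}
\]
that you derive for $(1)\Rightarrow(2)$ is also correct and elegant, and Birkhoff indeed gives $\lVert\cA(q,N)\rVert\to 0$ almost everywhere. However, the remaining step---upgrading this to $\lambda_\mu(\cA)<0$---has a genuine gap that your proposed fix does not close. Your final inequality
\[
\lambda_\mu(\cA)\le -\frac{\log 2}{N}\mu(B_N)+\frac{1}{N}\int_{B_N^{c}}\log C\,d\mu
\]
does yield $\lambda_\mu(\cA)<0$ \emph{provided} $\log C\in L^{1}(\mu)$, but this integrability is neither assumed nor derivable from~\eqref{dpaz}: the hypothesis only gives a Borel function $C$ with no tail control whatsoever. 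The ``truncation informed by the pointwise decay'' that you announce does not work. If one truncates by splitting $M$ into $\{C\le L\}$ and $\{C>L\}$ and uses the crude bound $\lVert\cA(\cdot,N)\rVert\le K^{N}$ on the latter, one obtains
\[
\lambda_\mu(\cA)\le -\frac{\log 2}{N}\mu\bigl(B_N\cap\{C\le L\}\bigr)+\frac{\log L}{N}\mu\bigl(B_N^{c}\cap\{C\le L\}\bigr)+(\log K)\,\mu(C>L),
\]
and sending $N\to\infty$ followed by $L\to\infty$ yields only $\lambda_\mu(\cA)\le 0$. More generally, your key bound gives merely polynomial decay $\lVert\cA(q,N)\rVert\lesssim C(q)(\gamma N)^{-1/p}$, which at the level of exponents is exactly $\lambda_\mu\le 0$; extracting strict negativity from Birkhoff-type averaging requires a uniform margin that the unbounded~$C$ destroys.

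The paper handles precisely this difficulty by a different mechanism. It fixes a positive-measure set $E=\{C\le K\}$, introduces the Lyapunov norms $\lVert x\rVert_{q,p}=(\sum_n\lVert\cA(q,n)x\rVert^{p})^{1/p}$, and shows that for $q\in E$ and \emph{any} $m\ge 1$ one has the uniform contraction $\lVert\cA(q,m)x\rVert_{f^{m}q,p}\le\gamma\lVert x\rVert_{q,p}$ with $\gamma=(1-K^{-p})^{1/p}<1$. Inducing on $E$ via the first-return map $\bar f$ and the induced cocycle $\overline{\cA}$, one gets $\lVert\overline{\cA}(q,n)\rVert\le K\gamma^{n}$, and Kac's lemma ($\tau_n(q)/n\to\mu(E)^{-1}$ a.e.) converts this into $\lambda_\mu(\cA)\le\mu(E)\log\gamma<0$. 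The point is that localizing to a set where $C$ is bounded replaces the missing integrability of $\log C$ by a finite return-time average; this is exactly the ingredient your Birkhoff approach lacks.
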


\begin{proof}
 Assume first that $\lambda_\mu (\cA)<0$ and take an arbitrary $\varepsilon >0$ such that $\lambda_\mu(\cA)+\varepsilon <0$. It follows from~\eqref{jkl} that
 \begin{equation}\label{overc}
  \overline C(q):=\sup  \{  \lVert \cA(q, n)\rVert e^{-n(\lambda_\mu (\cA)+\epsilon)}: n\in \N_0\}<\infty,
 \end{equation}
for $\mu$-a.e. $q\in M$. Obviously, $\overline C$ is measurable and 
\begin{equation}\label{0549q}
 \lVert \cA(q, n)\rVert \le \overline C(q)e^{n(\lambda_\mu (\cA)+\epsilon)} \quad \text{for $\mu$-a.e. $q\in M$ and $n\in \N_0$.}
\end{equation}
Take now any $p>0$ and $x\in X$. It follows from~\eqref{0549q} that 
\[
 \sum_{n=0}^\infty \lVert \cA(q, n)x\rVert^p \le \overline C(q)^p \lVert x\rVert^p \sum_{n=0}^\infty e^{np(\lambda_\mu (\cA)+\varepsilon)}=\frac{\overline C(q)^p}{1-e^{p(\lambda_\mu (\cA)+\varepsilon)}}\lVert x\rVert^p
,\]
and consequently~\eqref{dpaz} holds with 
\[
 C(q):=\frac{\overline C(q)}{(1-e^{p(\lambda_\mu (\cA)+\varepsilon)})^{1/p}}\in (0, \infty).
 \]
 
Conversely, let us  assume that there exist a Borel-measurable function $C\colon M \to (0, \infty)$ and $p>0$ such that~\eqref{dpaz} holds  for  $\mu$-a.e. $q\in M$ and $x\in X$. 
Let
\[
 \lVert x\rVert_{q, p}=\bigg{(} \sum_{n=0}^\infty \lVert \cA(q, n)x\rVert^p \bigg{)}^{1/p} \quad x\in X, \ q\in M.
\]
Noting that $\sum_{n=0}^\infty \lVert \cA(q, n)x\rVert^p \ge \lVert \cA(q, 0)x\rVert^p= \lVert x\rVert^p$, it follows from~\eqref{dpaz} that
\begin{equation}\label{rt}
 \lVert x\rVert \le \lVert x\rVert_{q, p}\le C(q) \lVert x\rVert \quad \text{for $\mu$-a.e. $q\in M$ and every $x\in X$.}
\end{equation}
Since $C$ is measurable,   there exists a Borel set $E\subset M$ such that $\mu(E)>0$ and \begin{equation}\label{K} K:=\sup_{q\in E} C(q)<\infty.\end{equation}
Set
\[
 \gamma:=\bigg{(}1-\frac{1}{K^p}\bigg{)}^{1/p} \in (0, 1).
\]
\begin{lemma}
 For any $m\in \N$, $q\in E$ and $x\in X$, we have that 
 \begin{equation}\label{rt2}
  \lVert \cA(q, m)x\rVert_{f^m (q), p} \le \gamma\lVert x\rVert_{q, p}.
 \end{equation}
\end{lemma}

\begin{proof}[Proof of the lemma]
Note that
\begin{equation}\label{rt3}
 \begin{split}
  \lVert \cA(q, m)x\rVert_{f^m (q), p}^p &=\sum_{n=0}^\infty \lVert \cA(f^m(q), n)\cA(q, m)x\rVert^p \\
  &=\sum_{n=0}^\infty \lVert \cA(q, m+n)x\rVert^p \\
  &=\sum_{n=m}^\infty \lVert \cA(q, n)x\rVert^p \\
  &\le \sum_{n=1}^\infty \lVert \cA(q, n)x\rVert^p \\
  &=\lVert x\rVert_{q, p}^p -\lVert x\rVert^p.
 \end{split}
\end{equation}
On the other hand, \eqref{rt} and~\eqref{K} imply  that 
\[
 \lVert x\rVert_{q, p}^p \le K^p \lVert x\rVert^p,
\]
and thus
\begin{equation}\label{rt1}
 \lVert x\rVert_{q, p}^p -\lVert x\rVert^p \le (1-1/K^p) \lVert x\rVert_{q, p}^p. 
\end{equation}
Combining~\eqref{rt3} and~\eqref{rt1}, we conclude that~\eqref{rt2} holds. 
\end{proof}
On the other hand, it follows from Poincare recurrence theorem (see~\cite{Walters}) that  $\mu(E')=\mu(E)$,
where
\[
 E':=\{ q\in E: \text{$f^n (q)\in E$ for infinitely many $n\in \N$}\}.
\]
For each $q\in E'$, set 
\[
 \tau(q):=\min \{n\in \N: f^n(q)\in E\} \quad \text{and} \quad \bar f(q):=f^{\tau (q)} (q).
\]
Moreover, let  \[ \overline A(q):=\cA(q, \tau(q)), \quad  q\in E'\] and consider the cocycle $\overline{\cA}$ over $\bar f$ and with generator $\overline A$. Note that
\begin{equation}\label{coc}
 \overline{f}^n (q)=f^{\tau_n(q)}(q) \quad \text{and} \quad \overline{\cA}(q, n)=\cA(q, \tau_n(q)) \quad \text{for $q\in E'$ and $n\in \mathbb N$,}
\end{equation}
where \[\tau_n(q):=\sum_{i=0}^{n-1} \tau (\bar f^i (q)).\]
\begin{lemma}
 For each $q\in E'$ and $n\in \N$, we have that
 \begin{equation}\label{gh}
  \lVert \overline{\cA}(q, n)x\rVert_{\overline f^n(q), p} \le \gamma^n \lVert x\rVert_{q, p} \quad \text{for every $x\in X$.}
 \end{equation}
\end{lemma}
\begin{proof}[Proof of the lemma]
By~\eqref{rt2} and~\eqref{coc}, we have that
\[
\begin{split}
  \lVert \overline{\cA}(q, n)x\rVert_{\overline f^n(q), p} &=\lVert \cA(q, \tau_n(q))x\rVert_{f^{\tau_n(q)}(q), p}\\
  &=\lVert \cA(f^{\tau_{n-1}(q)}(q), \tau_n(q)-\tau_{n-1}(q))\cA(q, \tau_{n-1}(q))x\rVert_{f^{\tau_n(q)}(q), p}\\
  &\le \gamma \lVert \cA(q, \tau_{n-1}(q))x\rVert_{f^{\tau_{n-1}(q)}(q), p}.
\end{split}
  \]
By iterating, we conclude that~\eqref{gh} holds. 
\end{proof}
It follows from~\eqref{rt}, \eqref{K} and~\eqref{gh} that 
\[
 \lVert \overline{\cA}(q, n)x\rVert \le \lVert \overline{\cA}(q, n)x\rVert_{\overline f^n(q), p} \le \gamma^n \lVert x\rVert_{q, p} \le K\gamma^n \lVert x\rVert,
\]
for every $x\in X$, $n\in \N$ and $q\in E'$. Therefore,
\begin{equation}\label{on}
 \lVert \overline{\cA}(q, n)\rVert \le K\gamma^n \quad \text{for $q\in E'$ and $n\in \N$.}
\end{equation}
By~\eqref{on}, 
\begin{equation}\label{zy}
 \limsup_{n\to \infty} \frac 1 n \log \lVert \overline{\cA}(q, n)\rVert <0 \quad \text{for $q\in E'$.}
\end{equation}
Consequently, for $\mu$-a.e. $q\in E'$ we have (using~\eqref{coc} and~\eqref{zy})
\begin{equation}\label{ok}
 \begin{split}
  \limsup_{n\to \infty} \frac{1}{\tau_n(q)} \log \lVert \cA(q, \tau_n(q))\rVert 
  &=\limsup_{n\to \infty} \frac{1}{\tau_n(q)} \log \lVert \overline{\cA}(q, n)\rVert \\
  &=\lim_{n\to \infty} \frac{n}{\tau_n(q)} \cdot \limsup_{n\to \infty} \frac{1}{n} \log \lVert \overline{\cA}(q, n)\rVert \\
  &=\mu(E)\cdot  \limsup_{n\to \infty} \frac{1}{n} \log \lVert \overline{\cA}(q, n)\rVert \\
  &<0,
 \end{split}
\end{equation}
where we have also used Kac's lemma (see~\cite{Walters}) which implies  that 
\[
 \lim_{n\to \infty} \frac{\tau_n(q)}{n}=\frac{1}{\mu(E)} \quad \text{for $\mu$-a.e. $q\in E'$.}
\]
Finally, since for $\mu$-a.e. $q\in E'$ we have 
\[
 \lambda_{\mu}(\cA)=\lim_{n\to \infty} \frac 1 n \log \lVert \cA(q, n)\rVert =\lim_{n\to \infty} \frac{1}{\tau_n(q)} \log \lVert \cA(q, \tau_n(q))\rVert,
\]
it follows immediately from~\eqref{ok} that 
 $\lambda_\mu (\cA)<0$. 
\end{proof}
The importance of Theorem~\ref{de} stems from the fact that the negativity of the largest Lyapunov exponent corresponds to the concept of the  \emph{nonuniform} exponential stability.
More precisely, we have the following result.
\begin{theorem}\label{de3}
Assume that  that $\lambda_\mu(\cA)<0$ for some $\mu \in \mathcal E(f)$. Then, for each $\epsilon >0$ there exists a measurable function $T\colon M \to (0, \infty)$ such that:
\begin{enumerate}
 \item for $\mu$-a.e. $q\in M$ and $n\in \N_0$, 
 \begin{equation}\label{1}
  \lVert \cA(q, n)\rVert \le T(q)e^{(\lambda_\mu(\cA)+\epsilon)n};
 \end{equation}
\item for $\mu$-a.e. $q\in M$ and $n\in \N_0$, 
\begin{equation}\label{2}
 T(f^n(q)) \le T(q)e^{\epsilon n}.
\end{equation}

\end{enumerate}

\end{theorem}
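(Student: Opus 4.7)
My plan is to construct $T$ as a tempered version of a first-stage function $T_0$ adapted to the sharper rate $\lambda_\mu(\cA)+\epsilon/2$. Explicitly, set
\[
T_0(q) := \sup_{k \geq 0} \lVert\cA(q,k)\rVert\,e^{-k(\lambda_\mu(\cA)+\epsilon/2)},
\qquad
T(q) := \sup_{n \geq 0} T_0(f^n(q))\,e^{-n\epsilon/2}.
\]
Both are measurable as countable suprema of measurable functions, and~\eqref{jkl} (exactly as in~\eqref{overc}) gives $T_0<\infty$ $\mu$-a.e.

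Granted that $T$ is also $\mu$-a.e.\ finite, properties (1) and (2) are routine. From $n=0$ in the definition of $T$ we have $T\geq T_0$, hence $\lVert\cA(q, n)\rVert \leq T_0(q)\,e^{n(\lambda_\mu+\epsilon/2)} \leq T(q)\,e^{n(\lambda_\mu+\epsilon)}$, giving (1). For (2), the index shift $m=n+1$ yields
\[
T(f(q)) = \sup_{n \geq 0} T_0(f^{n+1}(q))\,e^{-n\epsilon/2} = e^{\epsilon/2}\sup_{m\geq 1} T_0(f^m(q))\,e^{-m\epsilon/2} \leq e^{\epsilon/2}\,T(q),
\]
so iterating gives $T(f^n(q)) \leq e^{n\epsilon/2}T(q) \leq e^{n\epsilon}T(q)$.

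The substantive step, and the main obstacle, is to show that $T$ is $\mu$-a.e.\ finite, i.e., that $T_0(f^n(q))\,e^{-n\epsilon/2}$ is bounded in $n$ for $\mu$-a.e.\ $q$. Since $\mu$ is $f$-invariant, $\mu\{T_0\circ f^n > e^{n\epsilon/2}\} = \mu\{T_0>e^{n\epsilon/2}\}$, so a Borel--Cantelli argument reduces the problem to the summability $\sum_n \mu\{T_0>e^{n\epsilon/2}\}<\infty$, which by the layer-cake formula is equivalent to $\log^+ T_0 \in L^1(\mu)$. Writing $\log T_0 = \sup_k \psi_k$ with $\psi_k := \log\lVert\cA(\cdot,k)\rVert - k(\lambda_\mu+\epsilon/2)$ subadditive and $\psi_k/k \to -\epsilon/2$ (by Kingman, using~\eqref{ww} to bound $\psi_1$), this integrability can be extracted from a subadditive maximal ergodic inequality. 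A cleaner alternative that sidesteps that inequality is to instead use $T_0(q) := \bigl(\sum_{k\geq 0}\lVert\cA(q,k)\rVert^p\,e^{-kp(\lambda_\mu+\epsilon/2)}\bigr)^{1/p}$ for sufficiently small $p>0$: the pressure $\Lambda(p) := \lim_{k\to\infty}\tfrac{1}{k}\log\int\lVert\cA(\cdot,k)\rVert^p\,d\mu$ satisfies $\Lambda(p)/p \to \lambda_\mu$ as $p\to 0^+$, so $\Lambda(p) < p(\lambda_\mu+\epsilon/2)$ for $p$ small, which places $T_0^p$ in $L^1(\mu)$ and hence (since $\log^+ x \leq x$) yields $\log^+ T_0 \in L^1(\mu)$.
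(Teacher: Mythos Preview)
Your construction of $T$ from $T_0$ is exactly the standard ``tempered envelope'' procedure (this is what lies behind the reference to Arnold's Proposition~4.3.3 in the paper), and your verification of~\eqref{1} and~\eqref{2} \emph{assuming} $T<\infty$ is fine. The gap is precisely where you locate it: proving $T<\infty$ $\mu$-a.e., equivalently that $T_0$ is tempered along orbits.

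Both routes you sketch for this step are incomplete. In the first, you reduce to $\log^+ T_0\in L^1(\mu)$ and invoke an unspecified ``subadditive maximal ergodic inequality''. Standard maximal inequalities control $\sup_k \psi_k/k$, not $\sup_k \psi_k$; the latter is what $\log T_0$ equals, and there is no general reason it should be integrable (indeed the paper's argument is carefully arranged so as \emph{not} to need this). In the second, the pressure claim $\Lambda(p)/p\to\lambda_\mu$ is unjustified: the sequence $k\mapsto \log\int\lVert\cA(\cdot,k)\rVert^p\,d\mu$ is not subadditive (you cannot split $\int\lVert\cA(f^l\cdot,k)\rVert^p\lVert\cA(\cdot,l)\rVert^p$), so $\Lambda(p)$ need not exist as a limit, and the upper bound on $\limsup_k\frac{1}{kp}\log\int\lVert\cA(\cdot,k)\rVert^p$ is a large-deviations statement requiring exponential decay of $\mu\{\tfrac{1}{k}\log\lVert\cA(\cdot,k)\rVert>\lambda_\mu+\delta\}$, which a.e.\ and $L^1$ convergence from Kingman do not provide.

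The paper sidesteps integrability of $\log^+ T_0$ entirely by a coboundary argument: from the cocycle relation and the uniform bound~\eqref{ww} one gets $\log\overline C(q)-\log\overline C(f(q))\le\psi$ for a constant $\psi$; setting $D=\log\overline C-\log\overline C\circ f$, one has $D^+\in L^1$, so Birkhoff gives $\frac{1}{n}\sum_{j<n}D\circ f^j\to a\in[-\infty,\infty)$, and telescoping yields $\frac{1}{n}\log\overline C(f^n(q))\to -a$. Invariance of $\mu$ forces $-a\le 0$, and $\overline C\ge 1$ forces $-a\ge 0$, so $\overline C$ is tempered. This is elementary and uses only~\eqref{ww}; I recommend replacing your two sketches with this argument, after which your construction of $T$ goes through verbatim.
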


\begin{proof}
 We follow closely arguments in~\cite[Theorem 2.]{DF} (which are in turn inspired by classical ergodic theory arguments). Let $\overline C$ be as in~\eqref{overc}. 
 \begin{lemma}
  We have that
  \begin{equation}\label{temp}
   \lim_{n\to \infty} \frac 1 n \log \overline{C} (f^n(q))=0, \quad \text{for $\mu$-a.e. $q\in M$.}
  \end{equation}

 \end{lemma}
 
 \begin{proof}[Proof of the lemma]
 Since
 \[
  \lVert \cA(q, n)\rVert \le \lVert \cA(f(q), n-1)\rVert \cdot \lVert A(q)\rVert,
 \]
we have that
\[
 e^{-n(\lambda_\mu (\cA)+\epsilon)} \lVert \cA(q, n)\rVert \le  e^{-(n-1)(\lambda_\mu (\cA)+\epsilon)}\lVert \cA(f(q), n-1)\rVert \cdot e^{-(\lambda_\mu(\cA)+\epsilon)}\lVert A(q)\rVert.
\]
Hence,
\[
 \overline{C}(q)\le \overline{C}(f(q))\cdot \max \{e^{-(\lambda_\mu(\cA)+\epsilon)}\lVert A(q)\rVert, 1\}.
\]
It follows from~\eqref{ww} that there exists $\psi>0$ such that
\[
 \log \overline{C}(q)-\log \overline{C}(f(q))\le \psi. 
\]
Set 
\[
 D(q):=\log \overline{C}(q)-\log \overline{C}(f(q)).
\]
We note that 
\begin{equation}\label{sk}
 \frac 1 n \log \overline{C}(f^n(q))=\frac 1 n \log \overline{C}(q)-\frac  1 n \sum_{j=0}^{n-1} D(f^j(q)),
\end{equation}
for $q\in M$ and $n\in \N$. Note that $D^+:=\max \{0, D\}$ is integrable. It follows from  Birkhoff's ergodic theorem, there exists $a\in [-\infty, \infty)$ such that
\begin{equation}\label{sk1}
 \lim_{n\to \infty} \frac 1 n \sum_{j=0}^{n-1} D(f^j(q))=a
\end{equation}
for $\mu$-a.e. $q\in M$. It follows from~\eqref{sk} and~\eqref{sk1} that
\[
 \lim_{n\to \infty}\frac{1}{n} \log \overline{C}(f^n(q))=-a
\]
On the other hand, since $\mu$ is $f$-invariant, for any $c>0$ we have
\[
\begin{split}
 \lim_{n\to \infty} \mu(\{q\in M: \log \overline{C}(f^n(q))/n \ge c\}) &=\lim_{n\to \infty} \mu(\{q\in M: \log \overline{C}(q)\ge nc\})\\
 &=0,
 \end{split}
\]
which implies that $a\ge 0$. Therefore, 
\[
 \lim_{n\to \infty}\frac 1 n  \log \overline{C}(f^n(q))\le 0 \quad \text{for $\mu$-a.e. $q\in M$.}
\]
Since $\overline{C}(q)\ge 1$, we conclude that~\eqref{temp} holds. 
 \end{proof}
It follows from~\cite[Proposition 4.3.3.]{A} and~\eqref{temp} that  there exists a measurable function $T \colon M \to (0, \infty)$ satisfying~\eqref{2}  and such that $\overline{C}(q)\le T(q)$ for
$\mu$-a.e. $q\in M$. Together with~\eqref{0549q} this implies that~\eqref{1} also holds. 
\end{proof}
\begin{remark}
We emphasize that the concept of nonuniform exponential stability is a particular case of the notion of a tempered exponential dichotomy which was recently studied in a  Banach space  setting in~\cite{ZLZ} and~\cite{BDV}.
\end{remark}

We shall now show that Theorem~\ref{de} when combined with the result obtained by Morris~\cite{M} (building on the earlier work of   Schreiber~\cite{SJS} and Sturman and Stark~\cite{SS}) gives also new conditions for \emph{uniform} exponential stability of continuous cocycles, i.e. of cocycles with the property that $A\colon M \to B(X)$ is a continuous map. 
We will say that a Borel subset $E\subset M$ has full-measure  if $\mu(E)=1$ for every $\mu \in \mathcal {E} (f)$.
\begin{theorem}\label{de2}
 Assume that $\cA$ is a cocycle over $f$ such that the map $A$ given by~\eqref{gen} is continuous.  Then, the following properties are equivalent:
 \begin{enumerate}
  \item there exist a Borel-measurable function $C\colon M \to (0, \infty)$, a full-measure set $E\subset M$ and $p>0$ such that~\eqref{dpaz} holds for each $q\in E$ and $x\in X$;
 \item $\cA$ is uniformly exponentially stable, i.e. there exist $D, \lambda >0$ such that
 \begin{equation}\label{us}
  \lVert \cA(q, n)\rVert \le De^{-\lambda n} \quad \text{for every $q\in M$ and $n\in \N_0$.}
 \end{equation}
\end{enumerate}

\end{theorem}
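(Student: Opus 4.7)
The implication $(2) \Rightarrow (1)$ is routine: given the uniform bound $\lVert \cA(q,n)\rVert \le De^{-\lambda n}$, one estimates, for any $p>0$,
$$\sum_{n=0}^\infty \lVert \cA(q,n)x\rVert^p \;\le\; D^p \lVert x\rVert^p \sum_{n=0}^\infty e^{-\lambda p n},$$
a convergent geometric series, so \eqref{dpaz} holds with $E = M$ (which is certainly of full measure) and $C$ a suitable constant function.

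The substance is in $(1) \Rightarrow (2)$, and my plan is to derive it as a consequence of Theorem~\ref{de} combined with Morris' subadditive ergodic optimisation theorem~\cite{M}. The first step is to fix an arbitrary $\mu \in \mathcal{E}(f)$. Since $E$ is a full-measure set in the sense defined before the statement, $\mu(E) = 1$, so \eqref{dpaz} holds for $\mu$-a.e.\ $q \in M$ and every $x \in X$. Theorem~\ref{de} then delivers $\lambda_\mu(\cA) < 0$; as $\mu \in \mathcal{E}(f)$ was arbitrary, this negativity holds \emph{for every} ergodic invariant measure simultaneously.

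The second step promotes this pointwise-in-$\mu$ negativity to the required uniform estimate, and here the continuity hypothesis on $A$ is indispensable. For a continuous linear cocycle over a continuous map on a compact metric space, Morris' theorem asserts that the supremum of top Lyapunov exponents over $\mathcal{E}(f)$ is attained by some $\mu^* \in \mathcal{E}(f)$ and coincides with the exponential growth rate of the operator norm:
$$\lim_{n\to\infty} \frac{1}{n} \log \sup_{q\in M} \lVert \cA(q,n)\rVert \;=\; \max_{\mu \in \mathcal{E}(f)} \lambda_\mu(\cA).$$
By the previous paragraph the right-hand side is the maximum of a family of strictly negative numbers, and being attained it is itself strictly negative. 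Choosing any $\lambda > 0$ smaller in absolute value than this maximum, I obtain $\sup_{q\in M}\lVert \cA(q,n)\rVert \le e^{-\lambda n}$ for all sufficiently large $n$; absorbing the finitely many remaining indices into a constant $D$ via~\eqref{ww} and the cocycle identity yields~\eqref{us}.

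The main obstacle is precisely this second step. Theorem~\ref{de} alone provides only that $\sup_{\mu \in \mathcal{E}(f)} \lambda_\mu(\cA) \le 0$, with no a priori gap away from zero, and a uniform exponential bound can fail without such a gap. Morris' attainment result is what upgrades ``$\lambda_\mu(\cA) < 0$ for every ergodic $\mu$'' to ``the maximum is attained, hence is strictly negative'', and this is exactly why continuity of $A$---rather than mere strong continuity as in Theorem~\ref{de}---is imposed here.
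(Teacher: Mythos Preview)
Your proposal is correct and follows essentially the same route as the paper: first invoke Theorem~\ref{de} to obtain $\lambda_\mu(\cA)<0$ for every $\mu\in\mathcal E(f)$, and then apply Morris' subadditive ergodic optimisation result~\cite[Theorem~A.3]{M} to the sequence $F_n(q)=\log\lVert \cA(q,n)\rVert$ to conclude that the uniform growth rate equals some $\lambda_\nu(\cA)<0$, whence~\eqref{us}. The paper makes explicit the upper semi-continuity of $F_n$ (from continuity of $A$ and $f$) needed to invoke~\cite{M}, which you leave implicit in your appeal to the ``continuous linear cocycle'' hypothesis, but the argument is otherwise identical.
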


\begin{proof}
 Proceeding as in the proof of Theorem~\ref{de}, it is easy to verify that~\eqref{us} implies that~\eqref{dpaz} holds for any $p>0$, $q\in 
 M$, $x\in X$ and with a  constant function $C \colon M \to (0, \infty)$. 
 
 Let us establish the converse. Assume that there exist a Borel-measurable function $C\colon M \to (0, \infty)$, a full-measure set $E\subset M$ and $p>0$ 
 such that~\eqref{dpaz} 
 holds for each  $q\in E$ and $x\in X$. It follows from Theorem~\ref{de} that 
 \begin{equation}\label{neg}
  \lambda_{\mu}(\cA)<0 \quad \text{for every $\mu \in \mathcal E(f)$.}
 \end{equation}
Consider a sequence of maps $(F_n)_{n\in \N}$, where  $F_n \colon M \to \R \cup \{-\infty\}$ is  given  by
\begin{equation}\label{Fn}
 F_n(q):=\log \lVert \cA(q, n)\rVert \quad \text{for $q\in M$ and $n\in \N$,}
\end{equation}
with the convention that $\log 0:=-\infty$. 
Note that since $A$ and $f$ are continuous, $F_n$ is upper semi-continuous for each $n\in \N$. Moreover,
\[
 F_{n+m}(q)\le F_n(f^m(q))+F_m(q), \quad \text{for $m, n\in \N$ and $q\in M$.}
\]
Therefore, it follows from~\cite[Theorem A.3.]{M} that there exists $\nu \in \mathcal E(f)$ such that
\[
 \lim_{n\to \infty} \frac 1 n  \max_{q\in M} F_n(q) =\lambda_{\nu}(\cA),
\]
which together with~\eqref{neg} implies that
\[
  \lim_{n\to \infty} \frac 1 n  \max_{q\in M} F_n(q)<0. 
\]
Hence, there exist $\lambda >0$ and $n_0\in \N$ such that
\[
 \max_{q\in M} F_n(q) \le -\lambda n \quad \text{for every $n\ge n_0$},
\]
which in a view of~\eqref{Fn} readily implies~\eqref{us}.
\end{proof}

Finally, we note that Theorem~\ref{de2} generalizes (for continuous cocyles) the following result established in~\cite{PPP}.
\begin{cor}\label{dpt}
  Assume that $\cA$ is a cocycle over $f$ such that the map $A$ given by~\eqref{gen} is continuous.  Then, the following properties are equivalent:
 \begin{enumerate}
  \item there exist $C>0$ and  $p>0$ such that for $q\in M$ and $x\in X$,
  \begin{equation}\label{02}
      \bigg{(} \sum_{n=0}^\infty \lVert \cA(q, n)x\rVert^p \bigg{)}^{1/p} \le C \lVert x\rVert;
\end{equation}
\item  there exist $D, \lambda >0$ such that~\eqref{us} holds.
\end{enumerate}
\end{cor}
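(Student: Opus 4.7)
The plan is to derive Corollary~\ref{dpt} as a direct specialisation of Theorem~\ref{de2}. For the implication $(1)\Rightarrow(2)$, observe that a constant $C>0$ is, in particular, a Borel-measurable function $C\colon M\to(0,\infty)$, and the whole space $E=M$ is trivially a full-measure set (since $\mu(M)=1$ for every $\mu\in\mathcal{E}(f)$). Thus condition (1) of the corollary is a strengthening of condition (1) of Theorem~\ref{de2}, and invoking Theorem~\ref{de2} immediately yields the existence of $D,\lambda>0$ such that~\eqref{us} holds.

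For the converse $(2)\Rightarrow(1)$, the argument is an elementary geometric-series estimate, identical in spirit to the first half of the proof of Theorem~\ref{de2}. Assuming $\lVert \cA(q,n)\rVert \le De^{-\lambda n}$ for all $q\in M$ and $n\in\N_0$, for any $p>0$ and any $x\in X$ we have
\[
\sum_{n=0}^\infty \lVert \cA(q,n)x\rVert^p \le D^p\lVert x\rVert^p\sum_{n=0}^\infty e^{-\lambda p n}=\frac{D^p}{1-e^{-\lambda p}}\lVert x\rVert^p,
\]
so~\eqref{02} holds with $C:=D/(1-e^{-\lambda p})^{1/p}$.

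There is no genuine obstacle here: the only substantive content is Theorem~\ref{de2}, and the corollary simply records that its hypothesis is automatic when $C$ is constant and the exceptional set is empty. I would therefore keep the proof to a few lines, essentially citing Theorem~\ref{de2} for the nontrivial direction and performing the trivial summation for the other.
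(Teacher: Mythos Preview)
Your proposal is correct and matches the paper's own proof, which simply applies Theorem~\ref{de2} with $E=M$ and the constant function $C(q)=C$. You spell out the easy direction $(2)\Rightarrow(1)$ via the geometric series explicitly, whereas the paper leaves both directions to the equivalence in Theorem~\ref{de2}, but this is a cosmetic difference only.
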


\begin{proof}
 The proof follows by applying Theorem~\ref{de2} for $E=M$ (which is obviously set of full-measure) and $C(q)=C$, $q\in M$.
\end{proof}

\begin{remark}
We would now like to  compare Theorem~\ref{de2} and Corollary~\ref{dpt}. More precisely, we want to explain why Theorem~\ref{de2} represents a nontrivial and meaningful
extension of Corollary~\ref{dpt}. We first note that in the statement of Corollary~\ref{dpt} it is required that~\eqref{02} holds for \emph{every} $q\in M$. On the other hand, in Theorem~\ref{de2}  
we required that~\eqref{dpaz} holds for each $q\in E$, where $E$ is a set of full-measure. This means that $M\setminus E$ is negligible from measure-theoretic point of view, i.e. $\mu(M\setminus E)=0$ for
each $\mu \in \mathcal E(f)$. However, it follows from results in~\cite{BS}  that in many generic situations, $M\setminus E$ can carry full topological entropy and full Hausdorff dimension, i.e.
can be as large as the whole space $M$ with respect to those two notions. In particular,  in such situations  $M\setminus E$ is non-empty (and in fact uncountable). 
Finally, we note that in the statement of Theorem~\ref{de2}, we have required the existence of a measurable, not necessarily constant  function $C\colon M \to (0, \infty)$ such that~\eqref{dpaz} holds. 
On the other hand, the condition~\eqref{02} involves the same $C>0$ for all $q\in M$. We conclude that Theorem~\ref{de2} represents  a substantial and nontrivial  generalization of Corollary~\ref{dpt}.
\end{remark}

\begin{remark}
We would also like to emphasize that Theorem~\ref{de2} deals with continuous cocycles, while all results in~\cite{PPP} consider strongly continuous cocycles.  Certainly, the assumption that the cocycle is continuous is more restrictive. However, it still  includes many interesting classes of dynamics that arise in applications (see~\cite[Chapter 8.]{CL}).
\end{remark}
\section{Cocycles over semi-flows}\label{COF}

We continue to assume that $M$ is a compact metric  space and that $X$ is a separable  Banach space.  A family of maps $\Phi=(\phi_t)_{t\ge 0}$, $\phi_t \colon M \to M$ is said to be a \emph{semi-flow} if:
\begin{enumerate}
 \item $\phi_t$ is a continuous map for each $t\ge 0$;
 \item $\phi_0=\Id$;
 \item $\phi_{t+s}=\phi_t \circ \phi_s$ for $t, s\ge 0$;
\item the map $(q, t)\mapsto \phi^t(q)$ is continuous on $ M\times [0, \infty)$. 
\end{enumerate}
Furthermore, we say that the map $\cA \colon M \times [0, \infty) \to B(X)$ is a \emph{cocycle} over $\Phi=(\phi_t)_{t\ge 0}$ if:
\begin{enumerate}
 \item $\cA(q, 0)=\Id$ for $q\in M$;
 \item $\cA(q, t+s)=\cA(\phi_s(q), t)\cA(q, s)$ for $q\in M$ and $t, s\ge 0$;
 \item $(q, t)\mapsto \cA(q, t)x$ is a continuous map on $M\times [0, \infty)$ for each $x\in X$.
\end{enumerate}
It follows easily from the uniform boundness principle and the assumption that $M$ is compact that $\cA$ is exponentially bounded, i.e. that there  $K, \omega >0$ such that
  \begin{equation}\label{ubg}
   \lVert \cA(q, t)\rVert \le Ke^{\omega t} \quad \text{for  $q\in M$ and $t\ge 0$.}
  \end{equation}
Let $\mathcal E(\Phi)$ denote the space of all ergodic, $\Phi$-invariant  Borel probability measures. 
It follows from Kingman's subadditive ergodic theorem~\cite{King} that  for each $\mu \in \mathcal E(\Phi)$,
there exists $\lambda_\mu (\cA) \in [-\infty, \infty)$ such that
\begin{equation}\label{jklc}
 \lambda_\mu (\cA)=\lim_{t\to \infty} \frac 1 t \log \lVert \cA(q, t)\rVert, \quad \text{for $\mu$-a.e. $q\in M$.}
\end{equation}
As for cocycles over maps, the number $\lambda_\mu (\cA)$ is called the \emph{largest Lyapunov exponent}  of the cocycle $\cA$ with respect to  $\mu$. 
\begin{proposition}\label{propd}
 Assume that $\lambda_\mu(\cA)<0$. Then for any $p>0$, there exists a measurable function $C\colon M \to (0, \infty)$ such that 
 \begin{equation}\label{po}
  \bigg{(} \int_0^\infty \lVert \cA(q, t)x\rVert^p \, dt \bigg{)}^{1/p} \le C(q)\lVert x\rVert 
 \end{equation}
for each $x\in X$ and $\mu$-a.e. $q\in M$.
\end{proposition}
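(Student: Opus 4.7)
The strategy mirrors the easy direction of Theorem~\ref{de}, only adjusted for continuous time. Fix $\varepsilon>0$ with $\lambda_\mu(\cA)+\varepsilon<0$ and set
\[
\overline{C}(q):=\sup_{t\ge 0}\lVert \cA(q,t)\rVert \,e^{-t(\lambda_\mu(\cA)+\varepsilon)}.
\]
The first step is to show that $\overline{C}(q)<\infty$ for $\mu$-a.e.\ $q\in M$. This follows from~\eqref{jklc}, which produces, for almost every such $q$, some $T(q)$ with $\lVert \cA(q,t)\rVert\le e^{t(\lambda_\mu(\cA)+\varepsilon)}$ whenever $t\ge T(q)$; on the complementary compact interval $[0,T(q)]$ the supremum is controlled by the exponential bound~\eqref{ubg}.

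The second step is Borel measurability of $\overline C$. For each fixed $t$, the map $q\mapsto \lVert \cA(q,t)\rVert$ is Borel-measurable by the same argument invoked earlier via~\cite[Lemma 2.4]{GTQ}. For each fixed $q$, joint continuity of $(q,t)\mapsto \cA(q,t)x$ together with separability of $X$ writes $t\mapsto \lVert \cA(q,t)\rVert$ as a pointwise supremum of a countable family of continuous functions, hence it is lower semi-continuous. Lower semi-continuity in $t$ allows one to replace $\sup_{t\ge 0}$ by $\sup_{t\in \mathbb{Q}\cap [0,\infty)}$, exhibiting $\overline C$ as a countable supremum of measurable functions of $q$.

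Finally, the definition of $\overline C$ yields $\lVert \cA(q,t)x\rVert\le \overline{C}(q)\,e^{t(\lambda_\mu(\cA)+\varepsilon)}\lVert x\rVert$ for $\mu$-a.e.\ $q$, all $t\ge 0$ and $x\in X$, and integrating the $p$-th power gives
\[
\int_0^\infty \lVert \cA(q,t)x\rVert^p\, dt\le \overline{C}(q)^p\lVert x\rVert^p\int_0^\infty e^{pt(\lambda_\mu(\cA)+\varepsilon)}\, dt=\frac{\overline{C}(q)^p\,\lVert x\rVert^p}{-p(\lambda_\mu(\cA)+\varepsilon)},
\]
so that $C(q):=\overline{C}(q)\bigl(-p(\lambda_\mu(\cA)+\varepsilon)\bigr)^{-1/p}$ satisfies~\eqref{po}. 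The only delicate point is the measurability in the second step, where passing from an uncountable to a countable supremum requires the lower semi-continuity observation; everything else is essentially the continuous-time restatement of~\eqref{overc}.
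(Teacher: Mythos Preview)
Your argument is correct and follows the same route as the paper's proof: define $\overline C(q)$ as the supremum of $\lVert \cA(q,t)\rVert e^{-t(\lambda_\mu(\cA)+\varepsilon)}$, observe it is finite $\mu$-a.e.\ by~\eqref{jklc}, and integrate the resulting decay estimate to obtain $C(q)=\overline C(q)\bigl(-p(\lambda_\mu(\cA)+\varepsilon)\bigr)^{-1/p}$. You are in fact more careful than the paper, which does not spell out the measurability of $\overline C$ in the continuous-time case; your reduction to a countable supremum via lower semi-continuity in $t$ fills that gap.
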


\begin{proof}
  Assume first that $\lambda_\mu (\cA)<0$ and take an arbitrary $\varepsilon >0$ such that $\lambda_\mu(\cA)+\varepsilon <0$. It follows from~\eqref{jklc}  that 
 \begin{equation}\label{def}
  \overline C(q):=\sup  \{  \lVert \cA(q, t)\rVert e^{-t(\lambda_\mu (\cA)+\epsilon)}: t\ge 0\}<\infty,
 \end{equation}
for $\mu$-a.e. $q\in M$.
Obviously,  
\begin{equation}\label{0549c}
 \lVert \cA(q, t)\rVert \le \overline C(q)e^{t(\lambda_\mu (\cA)+\epsilon)} \quad \text{for $\mu$-a.e. $q\in M$ and $t\ge 0$.}
\end{equation}
For any $p>0$, using~\eqref{0549c} we have that
\[
 \begin{split}
  \int_0^\infty \lVert \cA(q, t)x\rVert^p \, dt \le \overline{C}(q)^p \lVert x\rVert^p \int_0^\infty e^{pt(\lambda_\mu (\cA)+\epsilon)}\, dt=
  \frac{\overline{C}(q)^p}{-p(\lambda_\mu(\cA)+\epsilon)}\lVert x\rVert^p
 \end{split}
\]
for $\mu$-a.e. $q\in M$ and $x\in X$. Hence, \eqref{po} holds with
\[
 C(q):=\frac{\overline{C}(q)}{(-p(\lambda_\mu(\cA)+\epsilon))^{1/p}}, \quad q\in M. 
\]

\end{proof}

We now establish the converse to Proposition~\ref{propd}. 

\begin{theorem}\label{cthm}
 Assume that there exist $p>0$ and a measurable function $C \colon M \to (0, \infty)$ such that~\eqref{po} holds for each $x\in X$ and $\mu$-a.e. $q\in M$.
  Then, $\lambda_\mu(\cA)<0$. 

\end{theorem}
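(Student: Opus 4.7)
My plan is to reduce to Theorem~\ref{de} via discretization at integer times. Set $f := \phi_1$ and view the restriction of $\cA$ to $M \times \N_0$ as a strongly continuous cocycle over $f$; the measure $\mu$ is automatically $f$-invariant. I would first transfer the integral bound~\eqref{po} into a summation bound: combining~\eqref{ubg} with the cocycle identity $\cA(q, n) = \cA(\phi_t(q), n-t)\cA(q, t)$, valid for $t \in [n-1, n]$, gives $\lVert \cA(q, n)x\rVert \le Ke^\omega \lVert \cA(q, t)x\rVert$ on that interval. Integrating in $t$ and then summing over $n \ge 1$ yields, for $\mu$-a.e.\ $q$ and every $x \in X$,
\[
\sum_{n=0}^\infty \lVert \cA(q, n)x\rVert^p \le \lVert x\rVert^p + (Ke^\omega)^p \int_0^\infty \lVert \cA(q, t)x\rVert^p \, dt \le C'(q)^p \lVert x\rVert^p,
\]
for an appropriate Borel $C' \colon M \to (0, \infty)$. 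This is exactly hypothesis~(1) of Theorem~\ref{de} applied to the discretized cocycle.

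The delicate point is that although $\mu$ is $\Phi$-ergodic, it need not be $\phi_1$-ergodic (e.g.\ a constant-speed periodic orbit of period~$1$ would make $\phi_1$ the identity on that orbit). I would bypass this by taking the ergodic decomposition $\mu = \int \mu_\alpha\, d\nu(\alpha)$ of $\mu$ as an $f$-invariant measure. Since the display above holds on a $\mu$-conull set, it holds on a $\mu_\alpha$-conull set for $\nu$-a.e.\ $\alpha$, and Theorem~\ref{de} then yields $\lambda_{\mu_\alpha}(\cA) < 0$ (computed with respect to $f$) for $\nu$-a.e.\ $\alpha$.

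To finish I match the discrete and continuous-time exponents. By~\eqref{jklc} and $\Phi$-ergodicity, the limit $\lim_n n^{-1}\log\lVert\cA(q, n)\rVert$ equals $\lambda_\mu(\cA)$ for $\mu$-a.e.\ $q$. On each $f$-ergodic component this same discrete limit equals $\lambda_{\mu_\alpha}(\cA)$ on a $\mu_\alpha$-conull set, so $\lambda_\mu(\cA) = \lambda_{\mu_\alpha}(\cA) < 0$ for $\nu$-a.e.\ $\alpha$, as required. The main obstacle is precisely the ergodic-decomposition step: continuous-time ergodicity of $\mu$ does not automatically descend to the time-one map, and some care is needed to ensure the measurable selection of $C'$ and the Datko-Pazy estimate survive on each component. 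An alternative, which avoids the decomposition entirely, would be to rerun the argument of Theorem~\ref{de} directly for the discretized cocycle: the Poincar\'e recurrence step only needs invariance, and the non-ergodic form of Kac's lemma still gives $\liminf_n n/\tau_n(q) > 0$ on a positive-measure subset, which combined with the continuous-time Kingman limit again forces $\lambda_\mu(\cA) < 0$.
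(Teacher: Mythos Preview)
Your approach matches the paper's: discretize via $f=\phi_1$, convert the integral bound~\eqref{po} into the summation bound~\eqref{dpaz} (the paper uses the interval $[n,n+1]$ rather than $[n-1,n]$, but the computation is the same), and invoke Theorem~\ref{de}. The paper's proof, however, simply asserts that $\mu \in \mathcal{E}(\phi_1)$ and applies Theorem~\ref{de} directly, without addressing the issue you raise. Your concern is legitimate---a $\Phi$-ergodic measure need not be $\phi_1$-ergodic, and your period-$1$ orbit example is exactly the obstruction---so on this point you are more careful than the paper. Your fix via the $\phi_1$-ergodic decomposition of $\mu$ is correct: the discrete Datko--Pazy bound passes to $\nu$-a.e.\ component $\mu_\alpha$, Theorem~\ref{de} gives $\lambda_{\mu_\alpha}(\cA)<0$, and comparing with the continuous-time Kingman limit~\eqref{jklc} (which already forces the integer-time limit to equal $\lambda_\mu(\cA)$ $\mu$-a.e.) yields $\lambda_\mu(\cA)<0$.
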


\begin{proof}
 If follows from~\eqref{ubg} that
 \[
  \lVert \cA(q, n+1)x\rVert \le \lVert \cA(\phi_t(q), n+1-t)\rVert \cdot \lVert \cA(q, t)x\rVert \le Ke^\omega \lVert \cA(q, t)x\rVert,
 \]
for $\mu$-a.e. $q\in M$, $n\in \N_0$, $t\in [n, n+1]$ and $x\in X$. Thus,
\[
 \lVert \cA(q, n+1)x\rVert^p \le K^p e^{\omega p} \int_n^{n+1} \lVert \cA(q, t)x\rVert^p \, dt
\]
and consequently
\[
 \sum_{n=1}^\infty \lVert \cA(q, n)x\rVert^p \le K^p e^{\omega p} \int_0^\infty \lVert \cA(q, t)x\rVert^p \, dt,
\]
for $\mu$-a.e. $q\in M$ and every $x\in X$. It follows from~\eqref{po} that 
\begin{equation}\label{gf}
 \bigg{(}  \sum_{n=0}^\infty \lVert \cA(q, n)x\rVert^p \bigg{)}^{1/p} \le (K^p e^{\omega p}C(q)^p+1)^{1/p} \lVert x\rVert, 
\end{equation}
for $\mu$-a.e. $q\in M$ and every $x\in X$. Note that the restriction of $\cA$ to $M \times \N_0$ is a cocycle over $\phi_1$ and that $\mu \in \mathcal E(\phi_1)$. 
Hence, it follows from Theorem~\ref{de} and~\eqref{gf} that 
\[
 \lim_{n\to \infty} \frac 1 n \log \lVert \cA(q, n)\rVert <0 \quad \text{for $\mu$-a.e. $q\in M$,}
\]
which implies that \[
  \lambda_{\mu}(\cA) =\lim_{t\to \infty} \frac 1 t \log \lVert \cA(q, t)\rVert=\lim_{n\to \infty} \frac 1 n \log \lVert \cA(q, n)\rVert<0.
                  \]
\end{proof}
The following is a continuous-time version of Theorem~\ref{de2}. 
\begin{theorem}\label{esr}
 Assume that $\cA$ is a continuous cocycle over $\Phi$. Furthermore, suppose that $M$ is a compact topological space. Then, the following properties are equivalent:
 \begin{enumerate}
  \item there exist a Borel-measurable function $C\colon M \to (0, \infty)$, a full-measure set $E\subset M$ and $p>0$ such that~\eqref{po} holds for each $q\in E$ and $x\in X$;
 \item $\cA$ is uniformly exponentially stable, i.e. there exist $D, \lambda >0$ such that
 \begin{equation}\label{usc}
  \lVert \cA(q, t)\rVert \le De^{-\lambda t} \quad \text{for every $q\in M$ and $t\ge 0$.}
 \end{equation}
 \end{enumerate}
\end{theorem}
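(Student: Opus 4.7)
The plan is to mirror the strategy of Theorem~\ref{de2}: deduce negativity of all relevant Lyapunov exponents and then invoke Morris~\cite[Theorem A.3.]{M} applied to the time-one map $\phi_1$. The implication $(2)\Rightarrow(1)$ is immediate: direct integration of $\lVert\cA(q,t)x\rVert^p \le D^p e^{-\lambda p t}\lVert x\rVert^p$ gives~\eqref{po} with $C(q) = D(\lambda p)^{-1/p}$ on $E = M$.

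For $(1)\Rightarrow(2)$, the first step is to apply Theorem~\ref{cthm}: for every $\mu\in\mathcal{E}(\Phi)$ the assumption $\mu(E)=1$ forces~\eqref{po} to hold $\mu$-a.e., whence $\lambda_\mu(\cA)<0$. In order to run the Morris argument for the discrete cocycle $\tilde\cA := \cA|_{M\times\N_0}$ over the continuous map $\phi_1$, one needs negativity of $\lambda_\nu(\tilde\cA)$ for every $\nu\in\mathcal{E}(\phi_1)$, and not merely for $\nu\in\mathcal{E}(\Phi)$. This bridging step is where I expect the main difficulty to lie. The idea is to average: given $\nu\in\mathcal{E}(\phi_1)$, set $\bar\nu := \int_0^1 (\phi_s)_*\nu\, ds$. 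This $\bar\nu$ is $\Phi$-invariant, and it is even $\Phi$-ergodic because every $\Phi$-invariant set is in particular $\phi_1$-invariant and hence has $\nu$-measure in $\{0,1\}$. Thus $\bar\nu\in\mathcal{E}(\Phi)$, and by the first step $\lambda_{\bar\nu}(\cA)<0$.

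Let $A$ denote the $\bar\nu$-full set on which $\frac{1}{n}\log\lVert\cA(q,n)\rVert \to \lambda_{\bar\nu}(\cA)$. Then $\int_0^1 \nu(\phi_s^{-1}A)\,ds = \bar\nu(A) = 1$, so some $s_0\in(0,1)$ satisfies $\nu(\phi_{s_0}^{-1}A) = 1$. Unwinding the cocycle identity twice,
\[
\cA(q,n+1) = \cA(\phi_{n+s_0}q,\,1-s_0)\,\cA(\phi_{s_0}q,\,n)\,\cA(q,\,s_0),
\]
so by the exponential bound~\eqref{ubg} one obtains $\lVert\cA(q,n+1)\rVert \le (Ke^\omega)^2\,\lVert\cA(\phi_{s_0}q,n)\rVert$. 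Taking logarithms, dividing by $n+1$, and sending $n\to\infty$ for $\nu$-a.e.\ $q$ (so that $\phi_{s_0}q\in A$) yields $\lambda_\nu(\tilde\cA) \le \lambda_{\bar\nu}(\cA) < 0$, as required.

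With negativity on $\mathcal{E}(\phi_1)$ in hand, the Morris-based argument from the proof of Theorem~\ref{de2} applies essentially verbatim to $\tilde\cA$ and produces constants $D,\lambda>0$ with $\lVert\cA(q,n)\rVert \le De^{-\lambda n}$ for all $q\in M$ and $n\in\N_0$. Finally, writing $t = n + r$ with $n\in\N_0$ and $r\in[0,1)$, the cocycle identity together with~\eqref{ubg} extends this bound to $\lVert\cA(q,t)\rVert \le D'e^{-\lambda t}$ for all $q\in M$ and $t\ge 0$, establishing~\eqref{usc}.
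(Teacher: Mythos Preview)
Your proof is correct, but it proceeds along a genuinely different route from the paper.  Both arguments begin by applying Theorem~\ref{cthm} to obtain $\lambda_\mu(\cA)<0$ for every $\mu\in\mathcal E(\Phi)$.  At that point the paper applies~\cite[Theorem~A.3]{M} (in the spirit of Schreiber~\cite{SJS} and Sturman--Stark~\cite{SS}) \emph{directly} to the continuous-time subadditive family $F_t(q)=\log\lVert\cA(q,t)\rVert$ over the semi-flow $\Phi$, obtaining $\lim_{t\to\infty}\frac1t\max_q F_t(q)<0$ and hence~\eqref{usc} immediately.  You instead discretise: you pass to the time-one cocycle over $\phi_1$, invoke Morris's result in its discrete form, and then interpolate back to continuous time via~\eqref{ubg}.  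This forces you to confront the genuine obstacle that $\mathcal E(\phi_1)$ can be strictly larger than $\mathcal E(\Phi)$, which you handle correctly by the averaging construction $\bar\nu=\int_0^1(\phi_s)_*\nu\,ds$ and the cocycle sandwich $\cA(q,n{+}1)=\cA(\phi_{n+s_0}q,1{-}s_0)\cA(\phi_{s_0}q,n)\cA(q,s_0)$.

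The trade-off is clear: the paper's proof is shorter but tacitly uses a continuous-time (semi-flow) version of the subadditive optimisation theorem, whereas your route is longer yet relies only on the discrete-time statement that~\cite{M} actually proves for sequences.  Your bridging argument is a nice, self-contained way of transferring negativity of Lyapunov exponents from $\mathcal E(\Phi)$ to $\mathcal E(\phi_1)$.  One small remark on your ergodicity step: once you note that a $\Phi$-invariant set $A$ satisfies $\phi_s^{-1}A=A$ for all $s$, you should observe explicitly that $\bar\nu(A)=\int_0^1\nu(\phi_s^{-1}A)\,ds=\nu(A)\in\{0,1\}$; this is implicit in what you wrote but worth stating.
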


\begin{proof}
 We shall show that (1) implies (2) since the converse is easy to show.  
 It follows from Theorem~\ref{cthm} that
 \begin{equation}\label{sw}
  \lambda_\mu(\cA)<0 \quad \text{for every $\mu \in \mathcal E(\Phi)$.}
 \end{equation}
For each $t\ge 0$, we define $F_t \colon M \to \mathbb R\cup \{-\infty\}$ by
\[
 F_t(q)=\log \lVert \cA(q, t)\rVert, \quad q\in M. 
\]
Note that $F_t$ is upper semi-continuous and that
\[
 F_{t+s}(q)\le F_t(\phi_s(q))+F_s(q), \quad \text{for $q\in M$ and $t, s\ge 0$.}
\]
It follows from~\eqref{sw} and~\cite[Theorem A.3.]{M} that
\[
 \lim_{t\to \infty} \frac 1 t \log \max_{q\in M} \lVert \cA(q, t)\rVert<0,
\]
which immediately implies~\eqref{usc}.
\end{proof}
The following result is a  direct consequence of the previous theorem. For the general case of strongly continuous cocycles it has been proved in~\cite[Theorem 2.3.]{PPP}.
\begin{cor}
  Assume that $\cA$ is a cocycle over $\Phi$ which is continuous.  Then, the following properties are equivalent:
  \begin{enumerate}
   \item there exist $C, p>0$ such that
   \[
    \bigg{(} \int_0^\infty \lVert \cA(q, t)x\rVert^p \, dt \bigg{)}^{1/p} \le C \lVert x\rVert,
   \]
   for every $q\in M$ and $x\in X$;
\item there exist $D, \lambda >0$ such that~\eqref{usc} holds. 
  \end{enumerate}

\end{cor}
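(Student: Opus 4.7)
The corollary is essentially a specialization of Theorem~\ref{esr}, so the plan is to derive it directly from that result by making the trivially available choices for $E$ and the constant $C$. The direction (2) $\Rightarrow$ (1) is the easy one: given (2), I would substitute the bound $\lVert \cA(q,t)x\rVert \le De^{-\lambda t}\lVert x\rVert$ into the integral and compute
\[
\int_0^\infty \lVert \cA(q,t)x\rVert^p \, dt \le D^p \lVert x\rVert^p \int_0^\infty e^{-p\lambda t}\,dt = \frac{D^p}{p\lambda}\lVert x\rVert^p,
\]
so (1) holds with $C=D/(p\lambda)^{1/p}$ and the same $p$.

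For (1) $\Rightarrow$ (2), I would simply invoke Theorem~\ref{esr} with $E = M$ and with the constant Borel-measurable function $C(q) \equiv C$. The set $E = M$ is tautologically of full measure since $\mu(M) = 1$ for every $\mu \in \mathcal{E}(\Phi)$, and the constant function is in particular Borel-measurable and positive. The hypothesis (1) here then coincides with condition (1) of Theorem~\ref{esr}, so we obtain condition (2) of Theorem~\ref{esr}, which is precisely the uniform exponential stability~\eqref{usc} asserted in (2) of the corollary.

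There is no serious obstacle to this argument — the entire content is already packaged in Theorem~\ref{esr}. The only small point worth highlighting is that the corollary's hypothesis requires continuity of the cocycle $\cA$, which matches the standing assumption of Theorem~\ref{esr}, so no additional regularity considerations arise. Thus the proof reduces to a one-line appeal to Theorem~\ref{esr}, together with the elementary computation for the reverse implication.
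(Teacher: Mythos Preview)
Your proposal is correct and follows exactly the approach the paper intends: the corollary is stated as ``a direct consequence of the previous theorem'' without a written proof, and the analogous discrete-time Corollary~\ref{dpt} is proved in the paper precisely by taking $E=M$ and the constant function $C(q)=C$ in Theorem~\ref{de2}. Your explicit computation for the direction (2)$\Rightarrow$(1) is also the standard one (cf.\ the proof of Proposition~\ref{propd}).
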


\end{document}